\newcommand{\cC}{{\mathcal C}}
\newcommand{\cL}{{\mathcal L}}
\newtheorem{theorem}{Theorem}
\newtheorem{claim}[theorem]{Claim}
\newtheorem*{definition*}{Definition}
\numberwithin{equation}{section}
\numberwithin{theorem}{section}
\title[The number of maximum primitive sets of integers]%
  {The number of maximum primitive sets of integers}
\author{Hong Liu}
\email{h.liu9@warwick.ac.uk}
\address{Mathematics Institute, University of Warwick, Coventry, CV4 7AL, UK}
\author{P\'eter P\'al Pach}
\email{ppp@cs.bme.hu}
\address{Department of Computer Science and DIMAP, University of Warwick, Coventry CV4 7AL, UK and Department of Computer Science and Information Theory, Budapest
  University of Technology and Economics, 1117 Budapest, Magyar tud\'osok
  k\"or\'utja 2., Hungary}
\author{Rich\'ard Palincza}
\email{pricsi@cs.bme.hu}
\address{Department of Computer Science and Information Theory, Budapest
  University of Technology and Economics, 1117 Budapest, Magyar tud\'osok
  k\"or\'utja 2, Hungary}
\thanks{H.L.\ was supported by the Leverhulme Trust Early Career Fellowship~ECF-2016-523.
P.P.P. was partially supported by the National Research, Development and Innovation Office NKFIH (Grant Nr.~PD115978) and the J\'anos Bolyai Research Scholarship of the Hungarian Academy of Sciences; he has also received funding from the European Research Council (ERC) under the European Union’s Horizon 2020 research and innovation programme (grant agreement No 648509). This publication reflects only its author's view; the European Research Council Executive Agency is not responsible for any use that may be made of the information it contains.}
\begin{document}

\begin{abstract}	
	A set of integers is \emph{primitive} if it does not contain an element dividing another. Denote by $f(n)$ the number of maximum-size primitive subsets of $\{1,\ldots, 2n\}$. We prove that the limit $\alpha=\lim_{n\rightarrow \infty}f(n)^{1/n}$ exists. Furthermore, we present an algorithm approximating $\alpha$ with $(1+\varepsilon)$ multiplicative error in $N(\varepsilon)$ steps, showing in particular that $\alpha\approx 1.318$. Our algorithm can be adapted to estimate also the number of all primitive sets in $\{1,\ldots, n\}$.
	
	We address another related problem of Cameron and Erd\H{o}s. They showed that the number of sets containing pairwise coprime integers in $\{1,\ldots, n\}$ is between $2^{\pi(n)}\cdot e^{(\frac{1}{2}+o(1))\sqrt{n}}$ and $2^{\pi(n)}\cdot e^{(2+o(1))\sqrt{n}}$. We show that neither of these bounds is tight: there are in fact $2^{\pi(n)}\cdot e^{(1+o(1))\sqrt{n}}$ such sets.
\end{abstract}

\date{\today}
\maketitle

\section{Introduction}
Enumeration problem is one of the central topics in combinatorics. In the past decade, it has attracted a great deal of attention. We refer the readers to~\cite{BMS,ST} for the literature on enumeration problems on graphs and other settings; see also~\cite{BLSh17,Hancock-Staden-Treglown,Tran} for more recent results in arithmetic setting forbidding additive structures.

In this paper, we consider enumeration problems of different flavours. We are interested in sets of integers with multiplicative forbidden structure. We say that a set of integers is \emph{primitive} if it does not contain any element dividing another. Let $g(n)$ be the number of primitive subsets of $[n]:=\{1,\ldots,n\}$. Cameron and Erd\H{o}s \cite{CamErd} proved that for sufficiently large $n$, $1.55967^n\leq g(n)\leq 1.6^n$ and conjectured that the limit of $g(n)^{1/n}$ exists. Recently, Angelo \cite{Angelo} verified this conjecture. However he was unable to provide a method to find better estimate of the limit.

A natural extension is to estimate the cardinality of certain subfamily consisting of ``largest'' members. For example, in the context of graphs, the number of maximal (under graph inclusion) triangle-free graphs on vertex set $[n]$ has been studied~\cite{BLPSh}; and in the arithmetic setting, the number of maximal (under set inclusion) sum-free subsets of $[n]$ has been considered~\cite{BLShT15,BLShT18+}. One such question about primitive sets, that is, the number of maximum primitive subsets of $[2n]$, was mentioned by Bishnoi in his blog post ``On a famous pigeonhole problem''~\cite{Bishnoi}. Note that a maximum primitive subset of $[2n]$ is of size $n$. Indeed, group elements of $[2n]$ into $n$ classes according to their largest odd divisor, then a primitive set can have at most one element from each class. On the other hand $\{n+1,\ldots,2n\}$ is primitive. Behind this pigeonhole argument was the famous story that young P\'osa came up with this solution during a dinner with Erd\H{o}s. Coming back to the question mentioned by Bishnoi, denoting by $f(n)$ the number of $n$-element primitive subsets of $[2n]$, Vijay \cite{Vijay} proved that for sufficiently large $n$, $1.303^n\leq f(n)\leq 1.408^n$. Interestingly, the sequence $f(n)$ was already considered in OEIS, the Online Encyclopedia of Integer Sequences~\cite{OEIS}. A priori, it was not clear whether the limit of $f(n)^{1/n}$ exists. Our main result answers this question, showing that this is indeed the case, i.e.~$f(n)^{1/n}$ converges to some $\alpha$, which is roughly $1.318$.
\begin{theorem}\label{thm-max-primitive}
	The number of $n$-element primitive subsets of $[2n]$ is $f(n)=(\alpha+o(1))^n$. Futhermore, for any $\varepsilon>0$, there exists $N(\varepsilon)$ such that $\alpha$ can be approximated with a multiplicative error $1+\varepsilon$ in $N(\varepsilon)$ steps.
\end{theorem}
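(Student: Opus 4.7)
The plan is first to recast $f(n)$ as a count of admissible height functions on a divisibility poset. Every $k\in[2n]$ has a unique representation $2^a m$ with $m$ odd and $m\le 2n$, partitioning $[2n]$ into $n$ chains under doubling. Since a primitive set contains at most one element from each chain, an $n$-element primitive subset corresponds to a function $h\colon\mathrm{Odd}([2n])\to\mathbb{Z}_{\ge 0}$ with $h(m)\le\lfloor\log_2(2n/m)\rfloor$, and primitivity translates to the condition $h(m_1)>h(m_2)$ whenever $m_1\mid m_2$ are distinct odd numbers in $[2n]$. Hence $f(n)$ equals the number of such admissible height functions.

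\textbf{Local approximation and the algorithm.} Fix a truncation parameter $T$ and write each odd $m=s\cdot r$ with $s$ its $T$-smooth part and $r$ its $T$-rough part; divisibility factors through this splitting, i.e.\ $m_1\mid m_2$ iff $s(m_1)\mid s(m_2)$ and $r(m_1)\mid r(m_2)$. I would group the odd $m\in[2n]$ by a finite \emph{local type} recording $s$ together with coarse data about $r$ (say $\omega(r)$ and its size range) and $\lfloor\log_2(2n/m)\rfloor$, so that the divisibility relations between types are captured by a finite transition table. The number of admissible height assignments consistent with a prescribed type distribution can then be evaluated by a transfer-matrix / dynamic program on types, yielding an explicit quantity $\alpha_T$ computable in finitely many steps depending only on $T$. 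Given $\varepsilon>0$, the algorithm of the theorem is to choose $T=T(\varepsilon)$ and output $\alpha_T$.

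\textbf{Error control and the main obstacle.} To conclude that $\alpha=\lim f(n)^{1/n}$ exists and satisfies $|\alpha-\alpha_T|\le\varepsilon(T)\alpha$ with $\varepsilon(T)\to 0$ as $T\to\infty$, I would prove matching bounds on $f(n)$. The lower bound is constructive: exhibit enough admissible $h$ whose value depends only on the local type of $m$. The upper bound is the crux: I need to show that the number of odd $m\le 2n$ whose divisibility neighborhood is not captured by the $T$-local type is only $o_T(n)$, uniformly in $n$, and that the height choices at these atypical sites multiply $f(n)$ by at most a $(1+o_T(1))^n$ factor. This is the hardest step, because odd integers with complicated rough parts — for instance two large prime factors whose product still fits in $[2n]$ — can create long-range divisibility constraints that tie together distant types and hence spoil a naive factorization. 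Controlling them requires a sieve-type estimate for the density of integers with atypical rough parts, combined with a careful accounting that bounds the number of height configurations they can participate in. Once this is done, the sandwich $\alpha_T(1-\varepsilon(T))\le f(n)^{1/n}\le\alpha_T(1+\varepsilon(T))$ (for $n$ sufficiently large in terms of $T$) gives both the existence of the limit and the approximation guarantee.
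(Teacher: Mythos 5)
Your reformulation via height functions on the doubling chains is exactly the paper's observation $(\dagger)$, and your smooth/rough splitting parallels the paper's decomposition of $[2n]$ into the lattices $L_l(t)=t\cdot M_l(2n/t)$ indexed by the part $t$ coprime to the first $l$ primes. The gap is at the step you yourself flag as the crux, and the fix you propose would not work as stated. You want to show that the number of odd $m\le 2n$ whose divisibility neighbourhood is not captured by the $T$-local type is $o_T(n)$; but almost every integer has a prime factor exceeding $T$, so almost every odd $m'$ admits a divisor $m'/p$ with $p>T$, and the set of elements tied together by such long-range relations has density $1-o(1)$ in $[2n]$, not $o_T(1)$. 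No sieve estimate will make these atypical elements rare, so the naive factorization over local types cannot be repaired this way.

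The paper circumvents this differently. Writing $q=p_{l+1}$, it restricts attention to the window $(2n/q,2n]$: if $a\mid b$ with both $a,b$ in this window, then $b/a<q$, so $b/a$ is $q$-smooth and $a,b$ lie in the same lattice $L_l(t)$ --- inside the window all divisibility is local by construction, with no density argument needed. An extension claim (this is also where the non-monotonicity of the max-size condition is handled) then shows that any union of max-size antichains of the lattices inside the window, taken in $(2n/q,2n]$, extends \emph{injectively} to a full $n$-element primitive set by topping up every untouched chain $C(u)$ with its unique element in $(n,2n]$; this gives the lower bound. For the upper bound, the chains not contained in $(2n/q,2n]$ are those with odd part at most $2n/2^s$ where $s=\lfloor\log_2 q\rfloor$, and their union has at most $\frac{s+2}{2^s}\,n$ elements, so the crude factor $2^{(s+2)n/2^s}$, which tends to $1$ per element as $l\to\infty$, absorbs them. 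Your scheme needs to be reorganised around such a window (or an equivalent device) before the transfer-matrix computation of $\alpha_T$ can be justified; as written, the sandwich $\alpha_T(1-\varepsilon)\le f(n)^{1/n}\le\alpha_T(1+\varepsilon)$ is not established, and neither therefore is the existence of the limit.
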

The function $N(\varepsilon)$ here can be explicitly given. Our algorithm provides upper ($\approx 1.3184$) and lower ($\approx 1.3183$) estimates within a ratio of $1.0001$. It can be adapted to estimate the limit of $g(n)^{1/n}$ as well, providing  upper ($\approx 1.5745$) and lower ($\approx 1.571$) estimates within a ratio of $1.0022$. 

\medskip

\noindent\emph{Remark 1:}
Our proof for convergence of $f(n)^{1/n}$ starts in a similar way as~\cite{Angelo} by looking at the divisibility lattices, however, the problem for $f(n)$ turns out to be more difficult partly due to the fact that the maximum property is not monotone. In other words, the number of max-size primitive subsets in a subset $X$ of $A$ is not necessarily a lower bound for the number of max-size primitive subsets in $A$,~e.g.~$f(7)=12>f(8)=10$~\cite{OEIS}. Several new ideas are needed to overcome this issue.

\medskip

\noindent\emph{Remark 2:}
To obtain estimates on the limit, our algorithm (Section~\ref{sec-numeric-max-primitive}) takes one step beyond the proof of convergence to look into the structure of the divisibility lattices of different sizes and analyse how they evolve over time as we merge them. We would also like to point out that although proving the convergence of $g(n)^{1/n}$ is simpler than that of $f(n)^{1/n}$ from the theoretical point of view, better approximation ratio is obtained for $f(n)$ under the same computational time because the parameters involved are much smaller.

\medskip

Another related problem concerns subsets $S$ of $[n]$ containing pairwise coprime elements, i.e.~$\forall~a,b$ in $S$, $(a,b)=1$. Cameron and Erd\H{o}s~\cite{CamErd} showed that the number of such sets is between $2^{\pi(n)}\cdot e^{(\frac{1}{2}+o(1))\sqrt{n}}$ and $2^{\pi(n)}\cdot e^{(2+o(1))\sqrt{n}}$. Deciding which bound is closer to the truth remains an interesting question. We resolve this question, showing that neither of these bounds is tight.

\begin{theorem}\label{thm-coprime}
	The number of subsets of $[n]$ containing pairwise coprime integers is $2^{\pi(n)}\cdot e^{(1+o(1))\sqrt{n}}$.
\end{theorem}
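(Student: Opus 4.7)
Write $Q := \{p \text{ prime}: p \leq \sqrt{n}\}$ and $P_1 := \{p \text{ prime}: \sqrt{n} < p \leq n\}$. Since $\pi(\sqrt{n}) = O(\sqrt{n}/\log n) = o(\sqrt{n})$, one has $2^{\pi(\sqrt{n})} = e^{o(\sqrt{n})}$ and thus $2^{\pi(n)} = 2^{|P_1|} \cdot e^{o(\sqrt{n})}$. The plan is to establish matching upper and lower bounds of $2^{\pi(n)} \cdot e^{(1+o(1))\sqrt{n}}$.

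\emph{Upper bound.} For any coprime $S \subseteq [n]$, partition $S = S_0 \sqcup S_1$, where $S_0 \subseteq \{1\} \cup P_1$ contains the elements with no prime factor in $Q$ and each $m \in S_1$ has a smallest prime factor $p(m) \in Q$. Coprimality forces $m \mapsto p(m)$ to be injective, so the number of choices for $S_1$ is at most $\prod_{p \in Q}(1 + n/p)$, and for $S_0$ at most $2^{|P_1|+1}$. The Mertens estimates $\theta(x) := \sum_{p \leq x}\log p = (1+o(1))x$ and $\pi(x) = (1+o(1))x/\log x$ yield
\[
\sum_{p \leq \sqrt{n}}\log(1 + n/p) = \pi(\sqrt{n})\log n - \theta(\sqrt{n}) + o(\sqrt{n}) = (1+o(1))\sqrt{n},
\]
hence $\#\{\text{coprime }S\} \leq 2^{\pi(n)} \cdot e^{(1+o(1))\sqrt{n}}$.

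\emph{Lower bound (construction).} I would construct coprime sets of the form $\{m_p : p \in J\} \cup T$, where $J \subseteq Q$, each $m_p$ belongs to $E_p := \{m \in [n] : p(m) = p\}$, the $m_p$'s are pairwise coprime, and $T \subseteq P_1$ is disjoint from the prime supports of the $m_p$'s. Distinct data yield distinct coprime sets by unique factorization. Mertens' third theorem gives $|E_p| = (e^{-\gamma}+o(1))\,\frac{n/p}{\log p}$, and the parallel estimate to the upper bound shows $\sum_{p \in Q}\log(1 + |E_p|) = (1+o(1))\sqrt{n}$. So the \emph{unconstrained} configuration count is $2^{|P_1|}\prod_{p \in Q}(1+|E_p|) = 2^{\pi(n)} \cdot e^{(1+o(1))\sqrt{n}}$, matching the upper bound; the task is to certify that a $e^{-o(\sqrt{n})}$ fraction of configurations actually form coprime sets.

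\emph{Main obstacle.} The chief difficulty is the global coprimality constraint between different $m_p$'s and between the $m_p$'s and $T$; a naive pairwise bound fails since two random $m_p/p$ and $m_{p'}/p'$ share the prime $2$ with constant probability. I would handle this probabilistically: each $p \in Q$ becomes active independently with probability $1/2$; conditional on activity, $m_p$ is chosen uniformly from $E_p$; independently each $q \in P_1$ joins $T$ with probability $1/2$. For any pair $p < p'$, a standard Mertens-type bound gives the probability that $m_p/p$ and $m_{p'}/p'$ share a prime factor as $O\!\left(1/(p \log p)\right)$; summing over pairs in $Q^2$ and over ``$m_p$ shares a prime with $T$'' events yields total expected collisions $O\!\left(|Q|\sum_{p}1/(p\log p)\right) = O(\sqrt{n}/\log n) = o(\sqrt{n})$. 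Via Janson's (or Suen's) inequality, the probability that a random configuration is coprime is then at least $e^{-o(\sqrt{n})}$, producing at least $2^{\pi(n)} \cdot e^{(1-o(1))\sqrt{n}}$ distinct coprime subsets of $[n]$ and completing the proof of Theorem~\ref{thm-coprime}.
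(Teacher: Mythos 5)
Your \emph{upper bound} is correct and takes a genuinely different route from the paper's. The paper splits a coprime set $A$ according to the number of prime factors $\Omega(a)$, disposes of the cases $\Omega\le 1$, $\Omega\ge 3$ and ``a small prime factor below $\sqrt n/\log n$'', and is left counting semiprimes $pq$ with $\sqrt n/\log n<p\le\sqrt n\le q$. Your decomposition by smallest prime factor, with the bound $\prod_{p\le\sqrt n}(1+n/p)$ and the Mertens computation $\pi(\sqrt n)\log n-\theta(\sqrt n)=(2-1+o(1))\sqrt n$, reaches the same conclusion in one stroke and makes transparent where the constant $1=2-1$ comes from; it is arguably cleaner than the paper's four-way case analysis.

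The \emph{lower bound}, however, has a genuine gap exactly at the step you flag. Janson's lower bound $\Pr[\bigcap\overline{A_i}]\ge\prod(1-\Pr[A_i])$ is an FKG/Harris statement for \emph{increasing} events over independent Bernoulli coordinates; your bad events (``$m_p$ and $m_{p'}$ share a prime factor'') are neither increasing nor defined over such a product space, and Suen's inequality in its usable form is an \emph{upper} bound on $\Pr[\bigcap\overline{A_i}]$. The Local Lemma does not close the gap either: for consecutive primes $p<p'$ the collision probability is $\asymp 1/p'$ (your stated $O(1/(p\log p))$ is too small as a function of the pair --- e.g.\ the pair $(2,3)$ collides with probability about $1/3$, since $3\mid m_3$ always and $3\mid m_2$ with density $1/3$), and each event is adjacent to $\Theta(|Q|)$ others, so the standard hypotheses fail. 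In general, a first moment of $o(\sqrt n)$ for the number of collisions does not imply $\Pr[\text{no collision}]\ge e^{-o(\sqrt n)}$ without further structure, and a naive greedy/union bound also breaks down both for very small $p$ (where $\sum_{r\in U}1/r$ exceeds $1$) and for $p$ near $\sqrt n$ (where $|E_p|$ can be smaller than the number of forbidden primes). The paper sidesteps all of this: since the dominant contribution to $\sum_p\log|E_p|$ comes from $p$ near $\sqrt n$, it suffices to use only semiprimes $p_iq_i$ with $p_i\le\sqrt n(1-1/\log n)$ and the $q_i\in[\sqrt n,n/p_i]$ chosen prime and pairwise distinct, which makes coprimality automatic and still yields $e^{(1+o(1))\sqrt n}$ choices. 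You should either adopt such an explicit construction or replace the correlation-inequality step by a carefully justified sequential sieve lower bound; as written, the lower half of your argument is not a proof.
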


The rest of the paper is organised as follows. We will present the proof of  Theorem~\ref{thm-max-primitive} in Section~\ref{sec-max-primitive}, and the algorithm approximating $\alpha$ in Section~\ref{sec-numeric-max-primitive}. We highlight how to adapt the algorithm for $g(n)$ in Section~\ref{sec-primitive}. The proof of~\ref{thm-coprime} is in Section~\ref{sec-coprime}.

\section{Number of maximum primitive subsets of $[2n]$}\label{sec-max-primitive}
Let $l$ be a positive integer, $P_l=\{p_1,p_2,\dots,p_l\}$ be the set of first $l$ primes, and $q:=p_{l+1}$ be the $(l+1)$st prime. Let $M_l\subseteq \mathbb N$ denote the set of all positive integers whose prime factors are all in $P_l$:
$$M_l=\{p_1^{\alpha_1}p_2^{\alpha_2}\dots p_l^{\alpha_l}: \alpha_1,\dots,\alpha_l\geq 0\}.$$
For $x>0$ let $M_l(x)=\{n\in M_l: n\leq x\}$. Note that $|M_l(x)|\le (1+\log_2x)^l$ as the number of choices for each $\alpha_i$ is at most $1+\log_2x$.


Partition $[2n]$ into classes in such a way that two elements belong to the same class if and only if their ratio can be written as a ratio of two elements from $M_l$. In other words, denoting $T_l=\{t\in[2n]: (t,p_1p_2\ldots p_l)=1\}$, the lattice containing $t\in T_l$ is the following:
$$L_l(t)=t\cdot M_l(2n/t)=\{tp_1^{\alpha_1}p_2^{\alpha_2}\dots p_l^{\alpha_l}:\alpha_1,\dots,\alpha_l\geq 0, tp_1^{\alpha_1}p_2^{\alpha_2}\dots p_l^{\alpha_l}\leq 2n \}.$$
Let $K$ be a positive integer. Define $T_{l,K}=T_l\cap (\frac{2n}{K+1}, \frac{2n}{K}]$, $T_{l,\le K}=\cup_{i\in[K]}T_{l,i}$, and $\cL_{l,K}=\cup_{t\in T_{l,\le K}} L_l(t)$. For brevity, throughout this section, we skip the subscript $l$ and write instead $M,T,L(t),T_K,T_{\le K}$ and $\cL_K$ whenever it is clear from the context.

Equip $L(t)$ with the divisibility partial ordering. Note that in $L(t)$, the size of a maximum antichain is the number of odd elements in $L(t)$. Indeed, by pigeonhole principle the size of an antichain cannot be larger; on the other hand, each odd element of $L(t)$ has a unique multiple in $L(t)\cap(n,2n]$, all of which form an antichain.


The following fact about max-size primitive sets will be frequently used. For each odd $u$ in $[2n]$, denote by $C(u)$ the chain $\{u,2u,4u,\ldots\}$.
\begin{itemize}
	\item[$(\dagger)$] If $A\subseteq [2n]$ is an $n$-element primitive subset, then $A$ contains exactly one element from each chain $C(u)$.
\end{itemize}

Note that if $A$ is a max-size primitive set, then $A\cap L(t)$ is a max-size antichain for all $t\in T$. However, the converse is not true as the partial order in $L(t)$ does not forbid prime divisors at least $q$. The following claim establishes a partial converse.

\begin{claim}\label{cl-large-primitive}
	Let $T^*\subseteq T$. If $A^*\subseteq (2n/q,2n]$ is a union of max-size antichains of $L(t)$, $t\in T^*$, then $A^*$ can be extended to an $n$-element primitive set $A^*\subseteq A\subseteq (2n/q,2n]$. Futhermore, such extension is injective, i.e.~distinct $A^*$ extend to distinct $A$.
\end{claim}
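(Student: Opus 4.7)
The plan is to use fact $(\dagger)$ to reduce the extension problem to choosing one element per chain $C(u)$, $u\in[2n]$ odd, inside the window $(2n/q,2n]$. First I would observe that the chains $C(u)$ refine the lattice partition: each odd $u\in[2n]$ factors uniquely as $u=tm$ with $t\in T$ and $m\in M_l$, and since $p_1=2$ and $u$ is odd, $m$ is odd, so $C(u)\subseteq L(t)$. Thus $L(t)$ is the disjoint union of the chains $C(u)$ for odd $u\in L(t)$, and a maximum antichain of $L(t)$ amounts to a choice of one element per such chain. Moreover, since $q\ge 3$ we have $(n,2n]\subseteq(2n/q,2n]$, and $L(t)\cap(n,2n]$ is itself a maximum antichain of $L(t)$ inside $(2n/q,2n]$ (its elements are pairwise incomparable because their ratios are less than $2$, and it contains the unique element of each chain $C(u)\subseteq L(t)$ that lies in $(n,2n]$).

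This motivates the extension: for each $t\in T^*$ keep $A^*\cap L(t)$, and for each $t\in T\setminus T^*$ adjoin $L(t)\cap(n,2n]$. The resulting set $A$ lies in $(2n/q,2n]$ and contains exactly one element from each of the $n$ chains $C(u)$, so $|A|=n$. The main step is then showing that $A$ is primitive. Within a fixed $L(t)$, the set $A\cap L(t)$ is an antichain by construction, so it suffices to rule out divisibilities across distinct lattices. Suppose $a\in A\cap L(t_1)$ and $b\in A\cap L(t_2)$ with $t_1\ne t_2$ and $a\mid b$; writing $a=t_1m_1$ and $b=t_2m_2$ with $m_i\in M_l$ and comparing $p$-adic valuations for $p\in P_l$ versus $p\notin P_l$ (using $\gcd(t_i,p_1\cdots p_l)=1$) yields $m_1\mid m_2$ and $t_1\mid t_2$ separately. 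The quotient $t_2/t_1$ is then a positive integer coprime to $p_1\cdots p_l$ and strictly greater than $1$, hence at least $q$, giving $b/a\ge q$. But $a>2n/q$ and $b\le 2n$ force $b/a<q$, a contradiction. This cross-lattice divisibility check is the only nontrivial step, and it is where the hypothesis $A\subseteq(2n/q,2n]$ is essential.

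Finally, injectivity of $A^*\mapsto A$ is automatic: for any valid extension $A\supseteq A^*$ and any $t\in T^*$, both $A^*\cap L(t)$ and $A\cap L(t)$ are antichains of $L(t)$, and $A^*\cap L(t)$ is already of maximum size, so the containment $A^*\cap L(t)\subseteq A\cap L(t)$ is forced to be an equality. Hence $A^*=A\cap\bigcup_{t\in T^*}L(t)$ is recovered from $A$, so distinct starting sets $A^*$ yield distinct extensions $A$.
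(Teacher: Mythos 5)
Your proposal is correct and follows essentially the same route as the paper: reduce to choosing one element per chain $C(u)$, extend by taking the unique element of $(n,2n]$ in each chain outside $\bigcup_{t\in T^*}L(t)$, and rule out cross-lattice divisibilities via the observation that $a\mid b$ with $a,b\in(2n/q,2n]$ in distinct lattices would force $b/a\ge q$. Your valuation-based justification of that last step and the antichain-maximality argument for injectivity are just more explicit versions of what the paper does.
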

\begin{proof}
 Note that each chain $C(u)$ is either contained in $L(t)$ for some $t\in T^*$ or disjoint from $\cL^*:=\cup_{t\in T^*}L(t)$. By~$(\dagger)$, to extend $A^*$, we need to pick exactly one element from each chain disjoint from $\cL^*$. 
 
 The elements of $A^*$ do not divide any element in $(2n/q,2n]\setminus \cL^*$. Indeed, for any $a\in A^*$ and $b\notin \cL^*$, if $b/a<q$ is an integer, then $b$ is in the same lattice as $a$, a contradiction. Thus, we only have to guarantee that the newly chosen elements form an antichain in $(2n/q,2n]\setminus \cL^*$ and none of them divides any element of $A^*$. This holds, if from each chain  $C(u)$ disjoint from $\cL^*$, we choose the unique element lying in $(n,2n]$. Clearly, the resulting $n$-element primitive set $A$ satisfies $A^*\subseteq A\subseteq (2n/q,2n]$. As $A^*=A\cap \cL^*$, this extension is injective.
\end{proof}

Let $f_q(n)$ be the number of $n$-element primitive sets in $(2n/q,2n]$. Clearly $f_q(n)\le f(n)$. We now give bounds on $f_q(n)$, which will be useful for proving the convergence of $f(n)^{1/n}$.
Note that if $\frac{2n}{i+1}<t\leq \frac{2n}{i}$, then the poset $L(t)$ is isomorphic with $M(i)$.  Consequently, an antichain in $L(t)\cap (2n/q,2n]$ corresponds to an antichain in $M(i)\cap (i/q,i]$, since $\frac{i}{q}\leq \frac{2n}{qt}<\frac{i+1}{q}$. Let $r'(i)=r_l'(i)$ denote the number of those max-size antichains of $M(i)$ in $(i/q,i]$. Then Claim~\ref{cl-large-primitive} implies that
$f_q(n)\ge \prod \limits_{i=1}^{K} r'(i)^{|T_i|}.$
Note that $|T_i|\approx\frac{2n}{i(i+1)}\prod\limits_{j=1}^l\left(1-\frac{1}{p_j}\right)$. Define $c'_{K}=c'_{l,K}=\prod\limits_{i=1}^K r'(i)^{\frac{2}{i(i+1)}\prod\limits_{j=1}^l\left(1-\frac{1}{p_j}\right)}$. Then the above inequality implies that for every $K$,
\begin{equation}\label{eq-lower}
	f(n)\ge f_q(n)\ge (c'_{K}+o(1))^n.
\end{equation}

Let $V_{K}:=(2n/q,2n]\setminus \cL_K$ be the left-over elements. Then $|V_{K}|=(\eta'_{K}+o(1))n$, where 
$\eta'_{K}=\eta'_{l,K}=\left(2-\frac{2}{q}\right)-\sum\limits_{i=1}^K \frac{2}{i(i+1)} \prod\limits_{j=1}^l\left(1-\frac{1}{p_j}\right)\left(|M(i)|-\left|M\left(\frac{i}{q}\right)\right|\right)$. Since $(2n/q,2n]$ are covered by $\cup_{i\in[2n], t\in T_i}L(t)$ and $|M(i)|\le (1+\log_2i)^l$, we have 
\begin{equation}\label{eq-eta}
	\eta'_K\leq \sum\limits_{i=K+1}^\infty \frac{2}{i(i+1)}\cdot (1+\log_2 i)^l\rightarrow 0,\quad \mbox{ as } \quad K\rightarrow \infty.
\end{equation}
Recall that each max-size primitive set must intersect each lattice at a max-size antichain, so 
$$f_q(n)\le 2^{|V_K|}\cdot\prod \limits_{i=1}^{K} r'(i)^{|T_i|} =(c'_{K}\cdot 2^{\eta'_{K}}+o(1))^n.$$

Let $s=\lfloor \log_2 q \rfloor$. We shall see that
\begin{equation}\label{eq-upper}
f(n)\le f_q(n)\cdot 2^{\frac{s+2}{2^s}\cdot n}\le (c'_{K}\cdot 2^{\eta'_{K}}\cdot 2^{\frac{s+2}{2^s}}+o(1))^n.
\end{equation}
Thus, combining~\eqref{eq-lower} and~\eqref{eq-upper}, we see that $f_q(n)^{1/n}$ and $c'_{K}$ converge to the same limit $\alpha_l$ as $K,n\rightarrow\infty$. From the definition of $f_q$ it follows that the sequence $\alpha_l$ is increasing and bounded. Hence, again by~\eqref{eq-lower} and~\eqref{eq-upper} we obtain that $f(n)^{1/n}$, $\alpha_l$ converge to the same limit $\alpha$ as $l,n\rightarrow\infty$.  

Let $\cC$ be the union of the chains contained in $(2n/q,2n]$, i.e.~$\cC=\cup_{u>\frac{2n}{q}}C(u)$. Let $A$ be a max-size primitive subset of $[2n]$. By~$(\dagger)$, $A':=A\cap \cC\subseteq (2n/q,2n]$ consists of exactly one element from each chain $C(u)$. Furthermore, $A'$ can be extended to an $n$-element primitive subset in $(2n/q,2n]$ by adding $C(u)\cap (n,2n]$ for each chain $C(u)$ not in $\cC$. Such extension is again injective as $A'=A\cap \cC$.  Therefore, the number of possibilities for $A'$ is at most $f_q(n)$. To bound the number of possibilities to extend $A'$ to $A$, the crude upper bound $2^{|[2n]\setminus \cC|}$ suffices. 

To estimate the size of $[2n]\setminus \cC$, recall the definition of $s$ implies that $s$ is the unique integer satisfying $2^s\leq q<2^{s+1}$. Then those chains $C(u)$ for which $2n/q<2n/2^s\leq u\leq 2n$ are contained in $\cC$. There are $n/2^s$ odd numbers in $[2n/2^s]$ and each corresponding chain has at most $s$ elements in $[2n/2^s,2n]$. So
$$|[2n]\setminus \cC|\le \frac{2n}{2^s}+\frac{n}{2^s}\cdot s= \left(\frac{s+2}{2^s}\right)n,$$
implying that
$$f(n)\leq f_q(n)\cdot 2^{|[2n]\setminus \cC|}\le (c'_{K}\cdot 2^{\eta'_{K}}\cdot 2^{\frac{s+2}{2^s}}+o(1))^n.$$
Setting $l= 10\log(1/\varepsilon)/\varepsilon$ and $K=K(l)=l^{10l\log\log l}$, we get, from~\eqref{eq-eta} and~\eqref{eq-upper}, that the limit satisfies $c'_{K}\leq \alpha \leq c'_{K} \cdot 2^{l^{-5l\log\log l}} \cdot 2^{\frac{s+2}{2^s}}\le (1+\varepsilon)c'_K$. In other words, to determine $\alpha$ up to a multiplicative error less than $1+\varepsilon$, it suffices to calculate $c'_{K(l)}$ for $l\approx \log(1/\varepsilon)/\varepsilon$.


Note that some of the estimations that we used are rough, if we calculate more precisely, much better estimations can be obtained (with a fixed $l$). For instance, for $l=2$, $K=10^6$ we get 
\begin{equation}\label{eq-lower-truncated-antichain}
	1.31464 \leq c'_{K}\leq \alpha.
\end{equation}

Finally, we mention another way of getting upper bounds for $\alpha$. Let $r_l(i)$ denote the total number of max-size antichains of $M_l(i)$ and let
$$c_{K}=c_{l,K}=\prod\limits_{i=1}^K r(i)^{\frac{2}{i(i+1)}\prod\limits_{i=1}^l\left(1-\frac{1}{p_i}\right)}$$
Then we have 
\begin{equation}\label{eq-upper-all-antichain}
	\alpha \leq c_{K}\cdot 2^{\eta_{K}},
\end{equation}
since the number of left-over elements is $(\eta_{K}+o(1))n$, where
$$\eta_{K}=2-\sum\limits_{i=1}^K \frac{2}{i(i+1)}\prod\limits_{j=1}^l(1-1/p_j) |M(i)|,$$
For $l=2,K=10^6$ we get from~\eqref{eq-upper-all-antichain} $\alpha\leq 1.32157$. Together with~\eqref{eq-lower-truncated-antichain}, we get lower and upper estimates within a ratio of
$\frac{1.32157}{1.31464}<1.0053$.

\section{Numerical bounds for max-size primitive sets}\label{sec-numeric-max-primitive}
In this section, we will present an improved algorithm to get better estimates than the ratio $1.0053$ obtained from~\eqref{eq-lower-truncated-antichain} and~\eqref{eq-upper-all-antichain}.


We have showed that $c'_{K(l)}$ converges to $\alpha$ from below, and it suffices to calculate $c'_{K(l)}$ for large enough $K$ and $l$ to get arbitrarily good lower bound for $\alpha$ using~\eqref{eq-lower-truncated-antichain}. However, for larger values of $l$ calculating the $r'(i)$ values is getting more and more difficult. Furthermore, the speed of convergence of $c'_{K(l)}$ to $\alpha$ is getting slower, that is, for larger $l$ we need to calculate $c'_{K}$ for a larger $K=K(l)$ to get better lower bounds. We get into the same difficulty when looking for upper bounds, since calculating the $r(i)$ values is also getting more difficult as $l$ increases. 
Note that for a fixed value of $K$ it can happen that the $c_{K}\cdot 2^{\eta_{K}}$ product is increasing as $l$ increases, as the number of left-over elements, $(\eta_{K}+o(1))n$, is larger for fixed $K$ and larger $l$. Because of this, within the same running time the resulting bound using $l$ primes can get worse and worse, as $l$ increases.

We shall combine the bounds obtained with different choices for $l$ (the number of primes) in a way, which will enable us to get significantly better lower and upper bounds within the same running time. 

Let $\cC_0,\cC_1,\ldots,\cC_k$ be a partition of all the chains $C(u)$ (hence it is also a partition of $[2n]$). The role of $\cC_0$ will be slightly different than the roles of other $\cC_i$. We will know the structure of $\cC_i$ for $i>0$ and we will think of $\cC_0$ as the set of left-over elements, about which we do not have any information except the size of $\cC_0$.

If $A\subseteq [2n]$ is a max-size primitive set, then for every $i$ the set $A\cap \cC_i$ must also be a max-size primitive set (max-size antichain) in $\cC_i$, due to $(\dagger)$. Denoting by $f(X)$ the number of max-size primitive subsets of $X$, we obtain that 
\begin{equation}\label{f_upp}
f(n)=f([2n])\leq \prod\limits_{i=0}^k f(\cC_i)\leq 2^{|\cC_0|}\prod\limits_{i=1}^k f(\cC_i).
\end{equation}
If we change the partition by merging $\cC_i$ and $\cC_j$ for some $0<i<j$, then, compared to the previous bound on the right hand side of \eqref{f_upp}, we gain a factor of $\frac{f(\cC_i\cup \cC_j)}{f(\cC_i)f(\cC_j)}\leq 1$.

Conversely, if $A_i\subseteq \cC_i$ is a max-size antichain in $\cC_i$ for every $i>0$, furthermore, all multiples (in $[2n]$) of all elements of $A_i$ are contained in $\cC_i$, then $\bigcup\limits_{i=1}^k A_i$, together with the unique element in $C(u)\cap (n,2n]$ from each chain $C(u)\in \cC_0$, forms a max-size primitive subset of $[2n]$. 
Therefore, denoting by $f^*(X)$ the number of those max-size primitive $A$ subsets of $X$ such that all multiples (in $[2n]$) of all elements of $A$ are contained in $X$, we obtain that 
\begin{equation}\label{f_low}
\prod\limits_{i=1}^k f^*(\cC_i)\leq \prod\limits_{i=0}^k f^*(\cC_i)\leq f([2n])=f(n).
\end{equation}

\subsection{Numerical estimations for the lower bound}

Let $S,K_1,\dots,K_S$ be positive integers, for brevity let $\underline{K}=(K_1,K_2,\dots,K_S)$.

\medskip

\noindent{\bf Step $0$.}
Let us consider first the $S$-dimensional lattices $\cL_{S,K_S}$, i.e.~$L_S(t)$ with $t\in T_{S,\le K_S}$. These are pairwise disjoint and as we have seen earlier, if we choose max-size antichains consisting of numbers larger than $2n/(tp_{S+1})$ in $M_S(2n/t)$, then their union, $A_S$, is a max-size antichain in $\cL_{S,K_S}$. Moreover, all multiples of all elements of $A_S$ are contained in $\cL_{S,K_S}$.

The number of choices for $A_S$ is $(\lambda_S(\underline{K})+o(1))^n$, where
$$\lambda_S(\underline{K}):=c'_{S,K_S}=\prod\limits_{i_S=1}^{K_S}\left(r_S'(i_S)\right)^{\frac{2}{i_S(i_S+1)}\prod\limits_{j\leq S}\left(1-\frac{1}{p_{j}}\right)}.$$


\medskip

\noindent
{\bf Step $1$.}
Now, consider the set of left-over elements $W_{S}:=[2n]\setminus \cL_{S,K_S}.$
Our aim is to add those lattices $L_{S-1}(t)$ for $t\in T_{S-1,\le K_{S-1}}$ that are contained in $W_S$. Let $t=p_S^{\alpha_S}t'$ with some $\alpha_S$ and $(t',p_1\dots p_S)=1$. The lattice $L_{S-1}(t)$ is contained in the left-over set $W_S$ if and only if $2n/(K_{S-1}+1)<t$ and $t'\leq 2n/(K_S+1)$. 
Hence, the number of those $2n/(i_{S-1}+1)<t\leq 2n/(i_{S-1})$ for which $L_{S-1}(t)\subseteq W_{S}$ is at least $(w(S-1,i_{S-1})+o(1))n$, where the weight $w(S-1,i_{S-1})$ can be calculated as follows: 
\begin{multline*}
w(S-1,i_{S-1}):=\frac{2}{i_{S-1}(i_{S-1}+1)}\prod\limits_{j\leq S-1}\left(1-\frac{1}{p_{j}}\right)\sum\limits_{\alpha_S=0}^{\infty}\left(1-\frac{1}{p_S}\right)\left(\frac{1}{p_S}\right)^{\alpha_S}I(p_S^{\alpha_S}\cdot i_{S-1}>K_S)=\\
= \frac{2\prod\limits_{j\leq S}\left(1-\frac{1}{p_{j}}\right)}{i_{S-1}(i_{S-1}+1)}\sum\limits_{\alpha_S=0}^{\infty}\frac{I(p_S^{\alpha_S}\cdot i_{S-1}>K_S)}{p_S^{\alpha_S}},
\end{multline*}
where $I(p_S^{\alpha_S}\cdot i_{S-1}>K_S)=1$, if $p_S^{\alpha_S}\cdot i_{S-1}>K_S$ holds, and 0 otherwise.

Therefore, the improvement compared to $\lambda_S(\underline{K})$ is the following factor:
$$\lambda_{S-1}(\underline{K}):=\prod\limits_{i_{S-1}=1}^{K_{S-1}}r_{S-1}'(i_{S-1})^{w(S-1,i_{S-1})}.$$

\medskip

\noindent{\bf Step $2$.}
Continuing like this, taking those $L_{S-2}(t)$ lattices in the set of left-over elements $W_{S-1}:=W_S\setminus \cL_{S-1,K_{S-1}}$ for which $t\in T_{S-2,\le K_{S-2}}$ the next improvement is the factor
$$\lambda_{S-2}(\underline{K}):=\prod\limits_{i_{S-2}=1}^{K_{S-2}}r_{S-2}'(i_{S-2})^{w(S-2,i_{S-2})},$$
where 
\begin{multline*}
w(S-2,i_{S-2}):=
\frac{2\prod\limits_{j\leq S}\left(1-\frac{1}{p_{j}}\right)}{i_{S-2}(i_{S-2}+1)}\sum\limits_{\alpha_S=0}^{\infty}\sum\limits_{\alpha_{S-1}=0}^{\infty}\frac{I(p_{S-1}^{\alpha_{S-1}}p_S^{\alpha_S}\cdot i_{S-2}>K_S)I(p_{S-1}^{\alpha_{S-1}}\cdot i_{S-2}>K_{S-1})}{p_{S-1}^{\alpha_{S-1}}p_{S}^{\alpha_{S}}}.
\end{multline*}

We continue this process, for $S-3\geq l\geq 1$. Step $(S-l)$ is as follows.

\medskip

\noindent{\bf Step $(S-l).$} 
Taking those $L_{l}(t)$ lattices contained in the set of left-over elements $W_{l+1}:=W_{l+2}\setminus \cL_{l+1,K_{l+1}}$ for which $t\in T_{l,\le K_{l}}$, the next improvement is
$$\lambda_{l}(\underline{K}):=\prod\limits_{i_{l}=1}^{K_{l}}r_{l}'(i_{l})^{w(l,i_{l})},$$
where the weight $w(l,i_{l})$ is

$$w(l,i_l):=
\frac{2\prod\limits_{j\leq S}\left(1-\frac{1}{p_{j}}\right)}{i_l(i_l+1)}\sum\limits_{\alpha_S=0}^{\infty}\sum\limits_{\alpha_{S-1}=0}^{\infty}\dots \sum\limits_{\alpha_{l+1}=0}^{\infty} \prod\limits_{v=l+1}^{S} \frac{I(i_lp_{l+1}^{\alpha_{l+1}}\dots p_v^{\alpha_v}>K_v)}{p_v^{\alpha_v}}.
$$

Finally, after Step $(S-1)$, we obtain the lower bound 
\begin{equation}\label{eq-f-lower-improved}
	\prod\limits_{l=1}^{S}\lambda_l(\underline{K})\leq \alpha.
\end{equation}
Note that assuming $K_1\geq K_2\geq \dots \geq K_S$ the formula for the weight $w(l,i_l)$ simplifies as
\begin{multline*}
w(l,i_l)=
\frac{2\prod\limits_{j\leq S}\left(1-\frac{1}{p_{j}}\right)}{i_l(i_l+1)}\sum\limits_{\alpha_S=0}^{\infty}\sum\limits_{\alpha_{S-1}=0}^{\infty}\dots \sum\limits_{\alpha_{l+1}=0}^{\infty}  \frac{I(i_lp_{l+1}^{\alpha_{l+1}}>K_{l+1})}{p_{l+1}^{\alpha_{l+1}}\dots p_S^{\alpha_S}}=\\
=\frac{2\prod\limits_{j\leq l+1}\left(1-\frac{1}{p_{j}}\right)}{i_l(i_l+1)} \sum\limits_{\alpha_{l+1}=0}^{\infty}  \frac{I(i_lp_{l+1}^{\alpha_{l+1}}>K_{l+1})}{p_{l+1}^{\alpha_{l+1}}} .
\end{multline*}
This assumption is natural, since calculating the $r'_l(i)$ values is getting more difficult as $l$ increases, meaning that with less primes we can calculate $r'_l(i)$ till larger values of $i$.


By taking $S=5,K_1=K_2=1006632960, K_3=50000, K_4=2695, K_5=1000$ we get that from~\eqref{eq-f-lower-improved} an improved lower bound
\begin{equation}\label{max_lower}
1.3183 \leq \alpha.
\end{equation}

\subsection{Numerical estimations for the upper bound}
We start with the partition $\cC_0=[2n]$, by \eqref{f_upp} this yields the trivial upper bound $f(n)\leq 4^n$, that is, $\alpha\leq 4$.

\medskip

\noindent{\bf Step $1$.} Fix a positive integer $K_1$. For every odd $t\in (2n/(K_1+1),n]$ we replace the one-element sets $t,2t,4t,\dots,2^{\lfloor\log_2 (2n/t)\rfloor}t$ by their union. (Note that performing this for every odd $t\in[2n]$ would result in the partition of $[2n]$ into chains $C(u)$.) After this, in the resulting partition every odd $t\in (2n/(K_1+1),2n]$ is contained in its chain $C(u)=L_1(t)$ and $\cC_0$ contains the left-over elements, specially, all positive integers up to $2n/(K_1+1)$. In every chain $C(u)$ the max-size antichain is a 1-element set, thus the number of max-size primitive sets is exactly the length of the chain $C(u)$.

Let $2\leq i_1\leq K_1$. The number of those odd $t$ for which $2n/(i_1+1)<t\leq 2n/i_1$  is $\frac{(1+o(1))n}{i_1(i_1+1)}$, and the improvement for these values of $t$ is $\frac{r_1(i_1)}{2^{|M_1(i_1)|}}=\frac{|M_1(i_1)|}{2^{|M_1(i_1)|}}$. Hence, the upper bound corresponding to the resulting partition is
$$\alpha\leq 4\prod\limits_{i_1=2}^{K_1} \left(\frac{|M_1(i_1)}{2^{|M_1(i_1)|}}\right)^{\frac{1}{i_1(i_1+1)}}.$$
Note that from now on the set of left-over elements, $\cC_0=[2n/(K_1+1)]$, will remain the same.

\medskip

\noindent{\bf Step $2$.} Let $K_2\leq K_1$ be a positive integer. For every odd $t\in (2n/(K_2+1),2n/3]$, in decreasing order, we replace the poset of $t$ and the poset of $3t$ by their union (for $n/9<t$ we have $L_2(3t)=L_{1}(3t)$). Note that when we get to $t$, the poset of $t$ is $C(t)=L_1(t)$ and the poset of $3t$ is $L_2(3t)$. Indeed, for $n/9<t$ we have $L_2(3t)=L_{1}(3t)$; while for $t\le n/9$, when we try to merge posets of $t$ and $3t$, we have already merged the posets of $3t$ and $9t$, which is $L_2(3t)$.

Let $3\leq i_2\leq K_2$. The number of those odd $t$ for which $2n/(i_2+1)<t\leq 2n/i_2$  is  $\frac{(1+o(1))n}{i_2(i_2+1)}$, and the improvement for these values of $t$ is the factor $\frac{r_2(i_2)}{r_1(i_2)r_2(i_2/3)}$, since the poset of $t$ is isomorphic with $M_1(i_2)$, the poset of $3t$ is isomorphic with $M_2(i_2/3)$ and the resulting poset is isomorphic with $M_2(i_2)$. Hence, the upper bound corresponding to the resulting partition is
$$\alpha\leq 4\prod\limits_{i_1=2}^{K_1} \left(\frac{|M_1(i_1)|}{2^{|M_1(i_1)|}}\right)^{\frac{1}{i_1(i_1+1)}}\prod\limits_{i_2=3}^{K_2} \left(\frac{r_2(i_2)}{r_1(i_2)r_2(i_2/3)}\right)^{\frac{1}{i_2(i_2+1)}}.$$


From now on, we continue as in Step 2.: with the help of the next prime $p_S$ we build  $S$-dimensional lattices from the existing $(S-1)$-dimensional lattices and calculate the improvement. For $3\leq S$ the general step is as follows:

\medskip

\noindent{\bf Step $S$.} Let $K_S\leq K_{S-1}$ be a positive integer. For every $t\in (2n/(K_S+1),2n/p_S]$ satisfying $(t,p_1p_2\dots p_{S-1})=1$, in decreasing order, we replace the poset of $t$ and the poset of $p_St$ by their union. When we consider $t$, the poset of $t$ is $L_{S-1}(t)$ and the poset of $p_St$ is $L_S(p_St,2n)$. (Note that for $2n/p_S^2<t$ we have $L_S(p_St,2n)=L_{S-1}(p_St,2n)$.)

Let $p_S\leq i_S\leq K_S$. The number of those $t$ for which $2n/(i_S+1)<t\leq 2n/i_S$ and $(t,p_1p_2\dots p_{S-1})=1$  is $\frac{(1+o(1))n}{i_S(i_S+1)}\prod\limits_{j<S}\left(1-\frac{1}{p_j}\right)$, and the improvement for these values of $t$ is the factor $\frac{r_S(i_S)}{r_{S-1}(i_S)r_S(i_S/p_S)}$, since the posets of $t$, $p_St$ and the resulting poset are isomorphic with $M_{S-1}(i_S)$, $M_S(i_S/p_S)$ and $M_S(i_S)$ respectively.
Hence, the upper bound corresponding to the resulting partition is
$$\alpha\leq 4\prod\limits_{i_1=2}^{K_1} \left(\frac{|M_1(i_1)|}{2^{|M_1(i_1)|}}\right)^{\frac{1}{i_1(i_1+1)}}\prod\limits_{l=2}^S \prod\limits_{i_l=p_l}^{K_l}\left( \frac{r_l(i_l)}{r_{l-1}(i_l)r_l(i_l/p_l)}   \right)^{  \frac{2}{i_l(i_l+1)}\prod\limits_{j<l} \left( 1-\frac{1}{p_j} \right)    }.$$


By taking $S=5,K_1=K_2=1006632960, K_3=50000, K_4=2695, K_5=1000$ we get that 
$$\alpha \leq 1.31843.$$
Together with~\eqref{max_lower}, we get lower and upper estimates within a ratio of
$\frac{1.31843}{1.3183}<1.0001$.

\section{Number of primitive subsets of $[n]$}\label{sec-primitive}
In this section, we highlight how to adapt the algorithm in the previous section to estimate $g(n)^{1/n}$.  

For $g(n)$, define now the lattice $L_l(t)=t\cdot M(n/t)$. Instead of $r'_l(i)$, the relevant parameter now is $R'_l(i)$, which is the number of those antichains of $M_l(i)$ that contain  only numbers that are larger than $i/q$. We then have $g_q(n)\ge (c'_{l,K}+o(1))^n$, where $c'_{l,K}=\prod\limits_{i=1}^K R_l'(i)^{\frac{1}{i(i+1)}\prod\limits_{j=1}^l\left(1-\frac{1}{p_j}\right)}$. 

Denoting by $R_l(i)$ the total number of antichains of $M_l(i)$ and setting $c_{l,K}=\prod\limits_{i=1}^K R_l(i)^{\frac{1}{i(i+1)}\prod\limits_{j=1}^l\left(1-\frac{1}{p_j}\right)}$ and $\eta_{l,K}=1-\sum\limits_{i=1}^K \frac{1}{i(i+1)} \prod\limits_{j=1}^l\left(1-\frac{1}{p_j}\right)|M_l(i)|$ we obtain for every $K,l$ the bounds
$$c'_{l,K}\leq \beta\leq c_{l,K}\cdot 2^{\eta_{l,K}}.$$
For $l=2,K=10^6$ we get $1.55966\leq \beta\leq 1.58852$. So the ratio of lower and upper estimates is $\frac{1.58852}{1.55966}<1.019$.

\subsection{Improved algorithm for numerical estimates for $g(n)^{1/n}$}
The convergence of $g(n)^{1/n}$ can be accelerated in an analogous way as we did in the case of max-size primitive subsets of $[2n]$.

The main difference is that here we do not need to pay special attention on chains $C(u)$. We similarly partition $[n]$ into subsets (posets) $\cC_0,\cC_1,\dots,\cC_k$. If $A\subseteq [n]$ is a primitive set, then for every $i$ the set $A\cap \cC_i$ must also be a primitive set (antichain) in $\cC_i$. Denoting by $g(X)$ the number of max-size primitive subsets of $X$ we obtain that 
\begin{equation}\label{g_upp}
g(n)=g([n])\leq \prod\limits_{i=0}^k g(\cC_i)\leq 2^{|\cC_0|}\prod\limits_{i=1}^k g(\cC_i).
\end{equation}
If we change the partition by merging $\cC_i$ and $\cC_j$ (for some $0<i<j$), then we gain a factor of $\frac{g(\cC_i\cup \cC_j)}{g(\cC_i)g(\cC_j)}\leq 1$.
On the other hand, denoting by $g^*(X)$ the number of those primitive $A$ subsets of $X$ which satisfy that all multiples (in $[n]$) of all elements of $A$ are contained in $X$, we obtain that 
\begin{equation}\label{g_low}
\prod\limits_{i=1}^k g^*(\cC_i)\leq \prod\limits_{i=0}^k g^*(\cC_i)\leq g([n])=g(n).
\end{equation}

\subsection{Numerical estimations for the lower bound}
Let $S,K_0,K_1,\dots,K_S$ be positive integers, for brevity let $\underline{K}=(K_0,K_1,K_2,\dots,K_S)$. We start with taking the $L_S(t)$ lattices with $(t,p_1p_2\dots p_S)=1$ and $n/(K_S+1)<t\leq n$, which gives the lower bound 
$$\lambda_S(\underline{K}):=c'_{S,K_S}=\prod\limits_{i_S=1}^{K_S}\left(R_S'(i_S)\right)^{\frac{1}{i_S(i_S+1)}\prod\limits_{j\leq S}\left(1-\frac{1}{p_{j}}\right)}$$
for $\beta$. Continuing this process, for $S-1\geq l\geq 0$. In $(S-l)$-th step, we will merge (to the current posets) those $L_{l}(t)$ lattices contained in the set of left-over elements  for which $n/K_{l}<t\leq n$ and $(t,p_1p_2\dots p_{l})=1$. The next improvement is the factor
$\lambda_{l}(\underline{K}):=\prod\limits_{i_{l}=1}^{K_{l}}R_{l}'(i_{l})^{w(l,i_{l})},$
where the weight $w(l,i_{l})$ is
$$w(l,i_l):=
\frac{\prod\limits_{j\leq S}\left(1-\frac{1}{p_{j}}\right)}{i_l(i_l+1)}\sum\limits_{\alpha_S=0}^{\infty}\sum\limits_{\alpha_{S-1}=0}^{\infty}\dots \sum\limits_{\alpha_{l+1}=0}^{\infty} \prod\limits_{v=l+1}^{S} \frac{I(i_lp_{l+1}^{\alpha_{l+1}}\dots p_v^{\alpha_v}>K_v)}{p_v^{\alpha_v}}.
$$

Finally, after Step $S$, we obtain the lower bound 
$$\prod\limits_{l=0}^{S}\lambda_l(\underline{K})\leq \beta.$$

Note that assuming $K_0\geq K_1\geq K_2\geq \dots \geq K_S$ the formula for the weight $w(l,i_l)$ simplifies as
\begin{multline*}
w(l,i_l)=
\frac{\prod\limits_{j\leq S}\left(1-\frac{1}{p_{j}}\right)}{i_l(i_l+1)}\sum\limits_{\alpha_S=0}^{\infty}\sum\limits_{\alpha_{S-1}=0}^{\infty}\dots \sum\limits_{\alpha_{l+1}=0}^{\infty}  \frac{I(i_lp_{l+1}^{\alpha_{l+1}}>K_{l+1})}{p_{l+1}^{\alpha_{l+1}}\dots p_S^{\alpha_S}}=\\
=\frac{\prod\limits_{j\leq l+1}\left(1-\frac{1}{p_{j}}\right)}{i_l(i_l+1)} \sum\limits_{\alpha_{l+1}=0}^{\infty}  \frac{I(i_lp_{l+1}^{\alpha_{l+1}}>K_{l+1})}{p_{l+1}^{\alpha_{l+1}}} .
\end{multline*}

Note that the last step here is Step $S$, which gives the improvement by the factor $\lambda_{0}(\underline{K}):=\prod\limits_{i_{0}=1}^{K_{0}}R_{0}'(i_{0})^{w(0,i_{0})}$. Since $R_0'(i_0)$ is the number of those antichains of the 1-element set $\{1\}$ where each element is at least $(i_0+1)/2$, we have $R_0'(1)=2$ and $R_0'(i_0)=1$ for $i_0>1$. That is, $\lambda_{0}(\underline{K})=2^{w(0,1)}$, where the number of left-over elements after Step $(S-1)$. is $(w(0,1)+o(1))n$. This represents that for any left-over element in the interval $(n/2,n]$ we can decide independently whether we would like to add them, or do not add them to the antichain. In the case of max-size independent sets this step would not give any improvement, since the number of max-size antichains in the 1-element set $\{i_0\}$ is 1, even without any restriction on $i_0$.

By taking $S=5, K_1=K_2=2^{20},K_3=960,K_4=196,K_5=98$ we get that 
\begin{equation}\label{gen_lower}
1.571068\leq \beta.
\end{equation}

\subsection{Numerical estimations for the upper bound}

We start with $\cC_0=[n]$, which yield the trivial upper bound $g(n)\leq 2^n$, that is, $\beta\leq 2$.

In Step 1, for every $t\in (n/(K_1+1),n/2]$, in decreasing order, we replace the poset of $t$ and the poset of $2t$ by their union. When we consider $t$, the poset of $t$ is $L_0(t)=\{t\}$, and the poset of $2t$ is $L_1(2t)=\{2t,4t,\dots, 2^{\lfloor\log_2 (n/(2t))\rfloor}t\}$. 
Let $R_l(i)$ be the number of antichains in $M_l(i)$. Let $2\leq i_1\leq K_1$. 
The number of those $t$ for which $n/(i_1+1)<t\leq n/i_1$ is $\frac{n(1+o(1))}{i_1(i_1+1)}$, and the improvement for these values of $t$ is $\frac{R_1(i_1)}{R_0(i_1)R_1(i_1/2)}$, since the chains of $t$, $2t$ and the resulting chain are isomorphic with $M_0(i_1)$ (1-element poset), $M_1(i_1/2)$ and $M_1(i_1)$ respectively. Hence, the upper bound corresponding to the resulting partition is:
$$\beta\leq 2\prod\limits_{i_1=2}^{K_1} \left(\frac{R_1(i_1)}{R_0(i_1)R_1(i_1/2)}\right)^{\frac{1}{i_1(i_1+1)}}.$$
Note that $R_0(i_1)=2$ is the number of antichains of the 1-element set $\{i_1\}$.

For $S\geq 2$, in Step $S$, we choose a positive integer $K_S\leq K_{S-1}$ and for every $t\in (n/(K_S+1),n/p_S]$ satisfying $(t,p_1p_2\dots p_{S-1})=1$, in decreasing order, we merge the poset of $t$ and the poset of $p_St$. When we consider $t$, the poset of $t$ is $L_{S-1}(t)$ and the poset of $p_St$ is $L_S(p_St)$. After Step $S$, we obtain the upper bound
$$\beta\leq 2\prod\limits_{l=1}^S \prod\limits_{i_l=P_l}^{K_l}\left( \frac{R_l(i_l)}{R_{l-1}(i_l)R_l(i_l/p_l)}   \right)^{  \frac{1}{i_l(i_l+1)}\prod\limits_{j\leq l-1} \left( 1-\frac{1}{p_j} \right)    }.$$

By taking $S=5, K_1=K_2=2^{20},K_3=960,K_4=196,K_5=98$ we get 
$$\beta \leq 1.574445.$$
Together with~\eqref{gen_lower}, we get lower and upper estimates within a ratio of
$\frac{1.574445}{1.571068}<1.00215$.

\section{Pairwise coprime}\label{sec-coprime}
In this section, we prove Theorem~\ref{thm-coprime}. For a positive integer $n$, denote by $\Omega(n)$ the number of distinct prime divisors of $n$. Denote by $\pi(n)$ the number of primes at most $n$. Assume that $A\subseteq [n]$ contains pairwise coprime integers, let
	$$A_1=\{a\in A: \Omega(a)\leq 1\},$$
	$$A_2=\{a\in A: \Omega(a)= 2\},$$
	$$A_3=\{a\in A: \Omega(a)\geq 3\},$$
	that is, $A_1,A_2,A_3$ contains the elements having at most 1, exactly 2, at least 3 prime factors, respectively.
	
	The number of choices for $A_1$ is precisely the number of subsets of the set of all primes and $1$, which is at most $2^{\pi(n)+1}$. 
	
	In $A_3$, every element has a prime factor below $n^{1/3}$. As elements in $A_3\subseteq A$ are pairwise coprime, each prime less than $n^{1/3}$ can be a divisor of at most one element in $A_3$. Thus the number of choices for $A_3$ is at most $n^{\pi(n^{1/3})}=e^{o(\sqrt{n})}$.
	
	Let us partition the elements of $A_2$ into two classes: $A_2'$ contains the elements having a prime factor which is at most $\sqrt{n}/\log n$ and $A_2''=A_2\setminus A_2'$ contains the remaining elements. The number of choices for $A_2'$ is at most $n^{\pi(\sqrt{n}/\log n)}=e^{o(\sqrt{n})}$. For each $a\in A_2''$, write $a=pq$ where $\sqrt{n}/\log n<p\le \sqrt{n}\le q<\sqrt{n}\log n$. Similarly, each of the $\pi(\sqrt{n})$ choices of $p$ can only divide at most one $a\in A_2''$; and there are obviously at most $\sqrt{n}\log n$ choices for the corresponding $q=a/p$. Hence, the number of choices for $A_2''$ is at most $(\sqrt{n}\log{n})^{\pi(\sqrt{n})}=e^{(1+o(1))\sqrt{n}}$.
	
	Now, we continue with the lower bound. Let $2=p_{k}<p_{k-1}<\dots<p_1\leq \sqrt{n}\left(1-\frac{1}{\log n}\right)$ be the primes up to $\sqrt{n}\left(1-\frac{1}{\log n}\right)$. We define $A_2$ in the following way: For $i=1,2,\dots,k$ we choose a pair $q_i$ for $p_i$ from the set of primes from the interval $[\sqrt{n},n/p_i]$. The pair of $p_i$ is chosen in such a way that $q_i$ has to be different from the previously chosen $q_1,\dots,q_{i-1}$ primes. Finally, $A_2:=\{p_iq_i: 1\leq i\leq k\}$. The number of choices for $q_i$ is
	\begin{multline*}
	\pi(n/p_i)-\pi(\sqrt{n})-(\pi\left(\sqrt{n}\left(1-\frac{1}{\log n}\right)\right)-\pi(p_i))\gtrsim \\
	\gtrsim \frac{2n/p_i}{\log n}-\frac{2\sqrt{n}}{\log n}-\frac{2\sqrt{n}\left(1-\frac{1}{\log n}\right)}{\log n}+\frac{2p_i}{\log n}\geq \frac{2\sqrt{n}}{(\log n)-1}-\frac{2\sqrt{n}}{\log n}\geq\frac{2\sqrt{n}}{(\log n)^2}
	\end{multline*}
	Since $k\sim \frac{2\sqrt{n}}{\log n}$, the number of choices for $A_2$ is at least $\left(\frac{2\sqrt{n}}{(\log n)^2}\right)^{\frac{(2+o(1))\sqrt{n}}{\log n}}=e^{(1+o(1))\sqrt{n}}.$
	After choosing $A_2$, we can add any subset of the complement of $\{p_1,\dots,p_k,q_1,\dots,q_k\}$, the number of these subsets is $2^{\pi(n)-2\pi(\sqrt{n})}=2^{\pi(n)}e^{o(\sqrt{n})}$. Therefore, the total number of subsets containing pairwise coprime elements is at least $2^{\pi(n)}\cdot e^{(1+o(1))\sqrt{n}}$. This completes the proof of Theorem~\ref{thm-coprime}.

\section{Concluding remarks}\label{sec-conclude}
In this paper, we prove that the limit of $f(n)^{1/n}$ exists and provide an algorithm showing that the limit is about $1.318$. We also determine asymptotically the number of subsets of $[n]$ with pairwise coprime elements. Our algorithm could be useful for other enumeration problems concerning sets with multiplicative constraints.

\medskip

%


\end{document}